\newtheorem{thm}{Theorem}[section]
\newtheorem{lem}[thm]{Lemma}
\newtheorem{prop}[thm]{Proposition}
\newtheorem{ques}[thm]{Question}
\theoremstyle{definition}
\newtheorem{defn}[thm]{Definition}
\newtheorem{exam}[thm]{Example}
\theoremstyle{remark}
\newtheorem{rem}[thm]{Remark}
\numberwithin{equation}{section}
\newcommand{\Z}{\mathbb Z}
\newcommand{\Q}{\mathbb Q}
\newcommand{\R}{\mathbb R}
\newcommand{\fix}{\mathrm{Fix}}
\newcommand{\rk}{\mathrm{rk}}
\newcommand{\id}{\mathrm{Id}}
\newcommand{\iso}{\mathrm{Isom}}
\newcommand{\aut}{\mathrm{Aut}}
\newcommand{\fgfp}{\mathrm{FGFP}}
\newcommand{\fgfpa}{\mathrm{FGFP_a}}
\begin{document}

\title{A note on the finitely generated fixed subgroup property}
\author{Jialin Lei}
\address{School of Mathematics and Statistics, Xi'an Jiaotong University, Xi'an 710049, CHINA}
\email{leijialin0218@stu.xjtu.edu.cn\\https://orcid.org/0000-0002-2470-8535}

\author{Jiming Ma}
\address{School of Mathematical Sciences, Fudan University, Shanghai 200433, CHINA}
\email{majiming@fudan.edu.cn\\https://orcid.org/0009-0001-3065-4094}

\author{Qiang Zhang}
\address{School of Mathematics and Statistics, Xi'an Jiaotong University, Xi'an 710049, CHINA}
\email{zhangq.math@mail.xjtu.edu.cn\\https://orcid.org/0000-0001-6332-5476}

\date{\today}

\subjclass[2010]{20F65, 20F34, 57M07}

\thanks{The authors are partially supported by NSFC (Nos. 12171092 and 12271385), and the Shaanxi Fundamental Science Research Project for Mathematics and Physics (Grant No. 23JSY027).}

\keywords{Fixed subgroup, BNS-invariant, Howson property, hyperbolic 3-manifold, direct product}

\begin{abstract}
We study when a group of form $G\times\mathbb{Z}^m (m\geq 1)$ has the finitely generated fixed subgroup property of automorphisms ($\fgfp_a$), by using the BNS-invariant, and provide some partial answers and non-trivial examples.
\end{abstract}
\maketitle

\section{Introduction}

For a group $G$, the \emph{rank} of $G$, denoted $\rk(G)$, is the minimal number of generators of $G$, and $\aut(G)$ denotes the group of all automorphisms of $G$.  For an endomorphism $\phi$ of $G$, the \emph{fixed subgroup} of $\phi$ is defined to be
$$\fix\phi :=\{g\in G \mid \phi(g)=g\}.$$
The study of fixed subgroups goes back to Dyer and Scott in 1975. In the paper  \cite{DS75},  they proved that for a finite order automorphism $\phi$ of a free group $F_n$ of rank $n$, the rank $\rk(\fix\phi)$ is not greater than $n$. Moreover, Scott conjectured that $\rk(\fix\phi)\leq n$ for any $\phi\in \aut(F_n)$, which was known as the Scott conjecture solved by Bestvina and Handel \cite{BH92}  in 1988, and extended to all endomorphisms by Imrich and Turner \cite{IT89} almost simultaneously. Note that for every endomorphism $\phi$ of a \textit{surface group} $G$ (i.e., the fundamental group of a closed surface), the same bound $\rk(\fix\phi)\leq \rk(G)$ also holds, see \cite{JWZ11}.  The study on fixed subgroups of various groups and related topics, such as Nielsen fixed point theory, has been active to nowadays, producing many interesting results, e.g. \cite{CL22, JZ24, RV21, ZZ21, ZZ23, ZZ24},  etc.

More generally, we say that a group $G$ has the \emph{finitely generated fixed subgroup property} of automorphisms ($\fgfp_a$), if  the fixed subgroup $\fix\phi$ is finitely generated for every automorphism $\phi\in \aut(G)$. Note that if a group $G$ has  $\fgfpa$, it must be itself finitely generated. In addition to free groups and surface groups mentioned above, many types of groups had been proven to have $\fgfpa$, such as Gromov hyperbolic groups \cite{Pa89, Neu92, Sh99},  limit groups \cite{MO12}, etc. The $\fgfpa$ property is preserved under taking free products \cite{LZ23}, but not under taking direct products \cite{ZVW15}. For example, the free group $F_2$ and $\Z$ both have $\fgfpa$, but their direct product $F_2\times \Z$  does not:
\begin{exam}
Let $\phi$ be an automorphism of $F_2\times \Z=\langle a, b\rangle \times \langle t \rangle$ defined by $\phi(a)=at, \phi(b)=b$ and $\phi(t)=t$.  An element $u\in\fix\phi$ if and only if the total $a$ exponent in $u$ is zero. Then the fixed subgroup $\fix\phi\cong F_{\aleph_{0}}\times \Z$ is not finitely generated (generated by the set $\{ t, a^iba^{-i}\mid i\in \Z\}$).
\end{exam}
So we wonder:
\begin{ques}\label{problem 1}
For a group $G$, when does $G\times \mathbb{Z}^m (m\geq 1)$ have $\fgfpa$?
\end{ques}
In this note, we will investigate this question and provide some partial answers, see Theorem \ref{main thm 1}, Theorem \ref{main thm 2} and Theorem \ref{main thm 3}, by using the BNS-invariant in Section \ref{sect 2} and Section \ref{sect 3}.

\section{Preliminaries}\label{sect 2}

\subsection{BNS-invariant}
The BNS-invariant introduced by Bieri, Neumann and Strebel \cite{BNS87} in 1987 is a geometric invariant of finitely generated groups, which is inspired by the work of Thurston \cite{Thu86}. It determines whether the kernel of a homomorphism from a group to an abelian group is finitely generated or not. Generally, the BNS-invariant is hard to compute. It was described for some families of groups like RAAGs \cite{MV95}, limit groups \cite{Koc10} and some other groups. Bieri and Renz \cite{BR88} introduced the higher dimension BNS-invariant to get more information of the kernel.

\begin{defn} Let $G$ be a finitely generated group with a finite generating set $X\subset G$,
 $n=\rk(H^1(G; \Z))$ the torsion-free rank of the abelianization of $G$, and the character sphere $S(G)=(\mathrm{Hom}(G,\mathbb{R})-{0})/\mathbb{R}_{+}$ which is an $(n-1)$-sphere. Note that an element of $S(G)$ is an equivalence class $[\chi]=\{r\chi|r\in \mathbb{R}_+\}$. Denote $\Gamma=\Gamma(G,X)$ the Cayley graph of $G$ with respect to $X$.
The first $\Sigma$-invariant (or \textit{$BNS$-invariant}) of $G$ is
$$\Sigma^1(G):=\{[\chi]\in S(G)\ |\ \Gamma_{\chi} ~\rm{is ~connected}\},$$
where $\Gamma_{\chi}$ is the subgraph of $\Gamma$ whose vertices are the elements $g\in G$ with $\chi(g)\geq 0$ and whose edges are those of $\Gamma$ which connect two such vertices.

A non-trivial homomorphism $\chi: G \to \R$ with discrete (and hence infinite cyclic) image is said to be a \textit{discrete }or \textit{rank one }homomorphism. It represents a rational point of  $S(G)$. The set
$$S\Q(G):=\{[\chi] \in S(G) \mid \chi ~\rm{is ~discrete}\}$$
of rational points is dense in $S(G$).
\end{defn}
For later use, we present the main results of the paper \cite{BNS87} in the following.
\begin{thm}[Bieri-Neumann-Strebel]\label{ker f.g}
	Let $G$ be a finitely generated group. Then
 	\begin{enumerate}
 \item Let $N$ be a normal subgroup of $G$ with $G/N$ abelian. Then $N$ is finitely generated if and only if
$$S(G, N):=\{[\chi]\in S(G) \mid \chi(N)=0\}\subset \Sigma^1(G).$$
In particular, $\Sigma^1(G)=S(G)$ if and only if the derived subgroup $[G,G]$ is finitely generated;
\item  Let $\phi: G \rightarrow \mathbb{Z}$ be a non-trivial homomorphism. Then $\ker\phi$ is finitely generated if and only if $\{\phi, -\phi\} \subset \Sigma^1(G)$. In particular, $S\Q(G)\subset \Sigma^1(G)$ if and only if $\ker\phi$ is finitely generated for every homomorphism $\phi: G \rightarrow \mathbb{Z}$.
\end{enumerate}
\end{thm}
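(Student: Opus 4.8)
The plan is to isolate the rank-one statement in part (2) as the technical heart, prove it by an explicit Reidemeister--Schreier computation controlled by the connectivity of $\Gamma_{\pm\phi}$, and then derive everything else from it. First I would dispose of the two ``in particular'' clauses and observe that part (2) is merely the special case of part (1) in which the abelian quotient is $\Z$: a character vanishing on $\ker\phi$ factors through $G/\ker\phi\cong\Z$ and so is a multiple of $\phi$, whence $S(G,\ker\phi)=\{[\phi],[-\phi]\}$; taking $N=[G,G]$ makes $S(G,N)$ the whole sphere (every character kills the derived subgroup), and the $S\Q$-clause is just part (2) read over all discrete characters. So the real content is the main assertion of part (1), and I would attack its rank-one instance first.

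Fix a finite symmetric generating set $X$ and an element $t$ with $\phi(t)=1$, so that $G=\ker\phi\rtimes\langle t\rangle$ and $\{t^i\}$ is a Schreier transversal. The Reidemeister--Schreier generators of $N=\ker\phi$ are $y_{x,i}=t^{i}x\,t^{-(i+\phi(x))}$ ($x\in X$, $i\in\Z$); they lie in $N$, they generate $N$, and they satisfy $y_{x,i+1}=t\,y_{x,i}\,t^{-1}$, so conjugation by $t$ merely shifts the level $i$. For the direction ``$N$ finitely generated $\Rightarrow$ connectivity'' I would argue geometrically: letting $c$ bound the downward $\phi$-excursion of a fixed $X$-word for each generator of $N$, and given a vertex $g$ with $\phi(g)=n\ge 0$, I would first climb $1\to t^{n+c}$ along $t$-edges, then spell out $t^{-n}g\in N$ in these generators so that the intermediate vertices stay at $\phi\ge n\ge 0$, and finally descend $c$ steps by $t^{-1}$ to reach $g$. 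This keeps $\phi\ge 0$ throughout, so $\Gamma_\phi$ is connected, and symmetrically $\Gamma_{-\phi}$.

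The reverse direction is where the argument really happens. A path $1=g_0,\dots,g_\ell=w$ in $\Gamma_\phi$ with $g_{s+1}=g_sx_s$ telescopes to $w=\prod_s y_{x_s,h_s}$ with $h_s=\phi(g_s)\ge 0$; hence connectivity of $\Gamma_\phi$ forces $N=\langle y_{x,i}: i\ge 0\rangle$, and symmetrically $\Gamma_{-\phi}$ forces $N=\langle y_{x,i}: i\le 0\rangle$. I would then use the second fact on a single level: each $y_{x,1}$ lies in $\langle y_{x,i}:i\le 0\rangle$, and being one word it already lies in a bounded window $\langle y_{z,j}: -M\le j\le 0\rangle$; conjugating by $t^{\ell-1}$ gives $y_{x,\ell}\in\langle y_{z,j}: \ell-1-M\le j\le \ell-1\rangle$ for every $\ell$, i.e. every generator is expressible through finitely many generators strictly below it. Feeding this into $N=\langle y_{x,i}:i\ge 0\rangle$ and repeatedly lowering the top level, the nonnegative levels collapse into the finite window $0\le i\le M$, so $N=\langle y_{x,i}: 0\le i\le M\rangle$ is finitely generated. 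The essential point, and the reason both hemispheres are needed, is that one connectivity gives one-sided generation while the other supplies the uniform bound $M$ that makes it finite.

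Finally I would bootstrap to arbitrary finitely generated abelian $Q=G/N$ by induction on its torsion-free rank $r$. When $r=0$ the subsphere $S(G,N)$ is empty while $N$ has finite index, so Reidemeister--Schreier already yields finite generation; for the inductive step I would pick a primitive $\psi\colon G\to\Z$ vanishing on $N$, note that $[\pm\psi]\in S(G,N)\subseteq\Sigma^1(G)$ forces $K=\ker\psi$ to be finitely generated by part (2), and then apply the inductive hypothesis to $N\trianglelefteq K$ (with $K/N$ abelian of rank $r-1$). I expect the main obstacle to be exactly this descent: one must show every $[\chi]\in S(K,N)$ lies in $\Sigma^1(K)$, which amounts to comparing $\Sigma^1(K)$ with $\Sigma^1(G)$ for the corank-one normal subgroup $K$. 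This restriction behaviour of the BNS-invariant is not formal, and pinning it down, or, alternatively, running a compactness argument on the compact sphere $S(G,N)$ against the open set $\Sigma^1(G)$ and redoing the telescoping uniformly in all rational directions, is the part I would expect to demand the most care.
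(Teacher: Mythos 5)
First, a point of reference: the paper does not prove this statement at all --- it is quoted verbatim as the main theorem of Bieri--Neumann--Strebel \cite{BNS87}, so there is no in-paper proof to compare against; your attempt has to be judged on its own. Your reductions of the two ``in particular'' clauses and the identification of part (2) as the $Q=\Z$ instance of part (1) are correct, and your Reidemeister--Schreier treatment of the rank-one case is essentially the standard argument and works: the telescoping identity $w=\prod_s y_{x_s,\phi(g_s)}$ is right, the two connectivity hypotheses do yield the two one-sided generating sets, and the window/shift argument producing $N=\langle y_{x,i}:0\le i\le M\rangle$ closes correctly by strong induction on the level. One slip in the forward direction: the path you describe does not end at $g$. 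After climbing to $t^{n+c}$ and spelling $t^{-n}g$ you sit at $t^{c}g$, and right-multiplying by $t^{-c}$ lands at $t^{c}gt^{-c}\neq g$. The standard repair is to note that the naive word $t^{n}w_{j_1}^{\epsilon_1}\cdots w_{j_m}^{\epsilon_m}$ shows every vertex of $\{\phi\ge 0\}$ is connected to $1$ inside $\{\phi\ge -c\}$, and then to use that left translation by $t^{c}$ is a graph automorphism of the Cayley graph carrying $\{\phi\ge -c\}$ onto $\{\phi\ge 0\}$; this is minor and fixable, not a conceptual error.

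The genuine gap is exactly where you suspect it: the induction on the torsion-free rank of $G/N$. Knowing $[\pm\psi]\in\Sigma^1(G)$ gives you a finitely generated $K=\ker\psi$ by part (2), and every $\chi\in S(K,N)$ does extend to some $\hat\chi\in S(G,N)\subseteq\Sigma^1(G)$ (by injectivity of $\R$ as an abelian group), but $[\hat\chi]\in\Sigma^1(G)$ does not formally imply $[\chi]\in\Sigma^1(K)$; the restriction behaviour of $\Sigma^1$ along a corank-one normal subgroup is a substantive theorem in its own right, and without it your induction does not close. This is not how \cite{BNS87} argue: their proof of part (1) runs through the $\Sigma$-machinery directly (openness of $\Sigma^1(G)$ in $S(G)$, compactness of the subsphere $S(G,N)$, and a uniform version of the telescoping argument over a finite cover of $S(G,N)$ by rational certificates), which is the second route you mention but do not execute. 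So the proposal proves part (2) in full (modulo the path fix) but leaves the general case of part (1) --- which the paper actually uses, e.g.\ for $N=\alpha^{-1}((\id-\mathcal{L})\Z^m)$ and $N=[G,G]$ --- resting on an unproved comparison of $\Sigma^1(K)$ with $\Sigma^1(G)$.
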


\subsection{Automorphism}
For a centerless group $G$ (i.e. the center $C(G)$ is  trivial), we have:

\begin{prop}\label{form of aut}
If $G$ is a centerless group, then every automorphism $\phi: G\times \Z^m\rightarrow G\times \Z^m$ $(m\geq 1)$ has the following form:
\[ \phi(g,v)=(\psi(g), ~~\alpha(g)+\mathcal Lv), ~~(g, v)\in G\times \Z^m, \]
where $\psi :G\rightarrow G$ and $\mathcal L: \Z^m\to \Z^m$ are automorphisms, and $\alpha :G\rightarrow \Z^m$ is a homomorphism.
\end{prop}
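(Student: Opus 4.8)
The plan is to exploit the fact that the center of $G\times\Z^m$ is completely pinned down by the centerless hypothesis on $G$. First I would observe that since $C(G)$ is trivial and $\Z^m$ is abelian, the center of the direct product is $C(G\times\Z^m)=C(G)\times\Z^m=\{1\}\times\Z^m$. Because the center is a characteristic subgroup, every automorphism $\phi$ must preserve it; that is, $\phi(\{1\}\times\Z^m)=\{1\}\times\Z^m$, and moreover $\phi$ restricts to a bijective self-map of $\{1\}\times\Z^m\cong\Z^m$. Writing $\phi(1,v)=(1,\mathcal Lv)$, this restriction yields an automorphism $\mathcal L\in\aut(\Z^m)$.

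Next I would decompose an arbitrary element as $(g,v)=(g,0)(1,v)$ and use multiplicativity of $\phi$. Setting $\phi(g,0)=(\psi(g),\alpha(g))$, where $\psi(g)\in G$ and $\alpha(g)\in\Z^m$ denote the two coordinates of the image, I compute
$$\phi(g,v)=\phi(g,0)\,\phi(1,v)=(\psi(g),\alpha(g))(1,\mathcal Lv)=(\psi(g),\ \alpha(g)+\mathcal Lv),$$
which is exactly the asserted form. Applying $\phi$ to $(g_1,0)(g_2,0)=(g_1g_2,0)$ and comparing the two coordinates shows at once that $\psi$ is an endomorphism of $G$ and $\alpha$ is a homomorphism $G\to\Z^m$, the additivity of $\alpha$ being read off the second coordinate of the (abelian) group $\Z^m$.

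The one point that requires care is verifying that $\psi$ is not merely an endomorphism but an automorphism. Here I would pass to the quotient by the center. Since $Z:=\{1\}\times\Z^m$ is characteristic, $\phi$ descends to an automorphism $\bar\phi$ of $(G\times\Z^m)/Z$. Under the canonical identification $(G\times\Z^m)/Z\cong G$ given by $(g,v)Z\mapsto g$, this induced map is precisely $\psi$, since $\bar\phi(g)=\pi(\phi(g,0))=\psi(g)$ where $\pi\colon G\times\Z^m\to G$ is the projection. Hence $\psi\in\aut(G)$, completing the argument. I expect this last step — promoting $\psi$ from endomorphism to automorphism — to be the only genuine obstacle; the quotient-by-center trick is the cleanest route, whereas a direct proof of surjectivity of $\psi$ would require chasing preimages under $\phi^{-1}$ and is more cumbersome.
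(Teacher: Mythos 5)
Your proposal is correct and follows essentially the same route as the paper: identify $C(G\times\Z^m)=1\times\Z^m$, use that the center is characteristic to get $\mathcal L$, and read off $\psi$ and $\alpha$ from $\phi(g,0)$. The only (cosmetic) difference is the final step: you promote $\psi$ to an automorphism via the induced map on the quotient $(G\times\Z^m)/(1\times\Z^m)\cong G$, whereas the paper notes surjectivity directly and gets injectivity by observing that $g\in\ker\psi$ forces $\phi(g,0)$ — and hence $(g,0)$ — into the center; both arguments are equally valid and rest on the same fact.
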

\begin{proof}
Since $G$ is a centerless group, the center
$$C(G\times \Z^m)=C(G)\times C(\Z^m)=1\times \Z^m.$$ Note that an automorphism preserves the center,  so $\phi(1\times \Z^m)=1\times \Z^m$ and $\phi(1,v)=(1, \mathcal{L}v)$ for $\mathcal{L}$ an invertable matrix. Therefore, we can suppose
\[ \phi(g,v)=(\psi(g), ~~\alpha(g)+\mathcal Lv), ~~(g, v)\in G\times \Z^m. \]
The endomorphism  $\psi :G\rightarrow G$ is clearly surjective, so it remains to show that it is injective as well. Indeed, for any $g\in \ker\psi$, we have $$\phi(g,0)=(1, \alpha(g))\in 1\times \Z^m=C(G\times \Z^m).$$So $(g,0)\in 1 \times \Z^m$, it implies $g=1$ and hence   $\psi :G\rightarrow G$ is an automorphism.
\end{proof}

\section{Main results}\label{sect 3}

In this section, we study the necessary and sufficient conditions  for Question \ref{problem 1} to have a positive answer.

\subsection{Necessary condition}
\begin{thm}\label{main thm 1}
For a group $G$,
\begin{enumerate}
    \item if $G\times \mathbb{Z}$ has $\fgfpa$, then $G$ has $\fgfpa$ and  $S\Q(G)\subset \Sigma^1(G)$, or equivalently, every homomorphism $\alpha: G\to\Z$ has finitely generated kernel;
    \item  if $G\times \mathbb{Z}^m$ has $\fgfpa$ for some $m\geq \rk(H^1(G;\Z))$, then $G\times \mathbb{Z}^n$ has $\fgfpa$ for every $0\leq n\leq m$, and $\Sigma^1(G)=S(G)$, or equivalently, its derived subgroup $[G,G]$ is finitely generated.
\end{enumerate}

\end{thm}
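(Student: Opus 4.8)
The plan is to produce, for each homomorphism $\alpha\colon G\to\Z^m$, an explicit ``shear'' automorphism of $G\times\Z^m$ whose fixed subgroup is $\ker\alpha\times\Z^m$, and to combine this with the elementary observation that a product map $\psi\times\mathrm{id}$ has fixed subgroup $\fix\psi\times\Z^k$ for a suitable $k$. Both constructions reduce finite generation of fixed subgroups to finite generation of kernels of characters, after which the BNS reformulations are read off directly from Theorem \ref{ker f.g}. Throughout I use only that a direct factor $A$ of a finitely generated group $A\times B$ is finitely generated, being a quotient.

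For (1), given $\alpha\colon G\to\Z$ define $\phi(g,k)=(g,\alpha(g)+k)$. One checks $\phi$ is an automorphism of $G\times\Z$ (its inverse is $(g,k)\mapsto(g,k-\alpha(g))$), and $(g,k)$ is fixed iff $\alpha(g)=0$, so $\fix\phi=\ker\alpha\times\Z$. The hypothesis forces this to be finitely generated, hence $\ker\alpha$ is finitely generated for every character $\alpha\colon G\to\Z$; by Theorem \ref{ker f.g}(2) this is precisely the assertion $S\Q(G)\subset\Sigma^1(G)$. To see that $G$ itself has $\fgfpa$, I would apply the hypothesis to $\psi\times\mathrm{id}\in\aut(G\times\Z)$ for an arbitrary $\psi\in\aut(G)$: its fixed subgroup is $\fix\psi\times\Z$, whose finite generation yields that of $\fix\psi$.

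For (2), the descent from $G\times\Z^m$ to $G\times\Z^n$ with $n\le m$ is again the product trick: given $\phi\in\aut(G\times\Z^n)$, extend it to $\phi\times\mathrm{id}$ on $G\times\Z^n\times\Z^{m-n}=G\times\Z^m$, with fixed subgroup $\fix\phi\times\Z^{m-n}$; the hypothesis makes this finitely generated, hence so is $\fix\phi$. For the remaining conclusion $\Sigma^1(G)=S(G)$, equivalently (Theorem \ref{ker f.g}(1)) that $[G,G]$ is finitely generated, the bound $m\ge r:=\rk(H^1(G;\Z))$ becomes essential. Writing the abelianization as $G^{ab}=\Z^{r}\oplus T$ with $T$ finite, I would choose $\alpha\colon G\to\Z^m$ factoring through $G^{ab}$ and injective on the free part $\Z^{r}$, which is possible exactly because $m\ge r$. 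Then the induced map $\bar\alpha\colon G^{ab}\to\Z^m$ has kernel equal to the torsion $T$, so the shear $\phi(g,v)=(g,\alpha(g)+v)$ has $\fix\phi=\ker\alpha\times\Z^m$ finitely generated, giving that $\ker\alpha$ is finitely generated; since $[G,G]$ is normal in $\ker\alpha$ with finite quotient $\ker\bar\alpha\cong T$, it has finite index there and is therefore finitely generated as well.

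The one genuinely delicate point is this last step, and the role of the hypothesis $m\ge r$ within it: the inequality is needed exactly to realize a homomorphism $\alpha\colon G\to\Z^m$ whose kernel exceeds $[G,G]$ only by the finite torsion $T$, so that finite generation of $\ker\alpha$ transfers to $[G,G]$. With a smaller target one can kill at most $m<r$ independent free generators of $G^{ab}$, and the argument degrades to the weaker conclusion $S\Q(G)\subset\Sigma^1(G)$ of part (1) rather than $\Sigma^1(G)=S(G)$. I would also remark that no centerlessness of $G$ is required here: unlike Proposition \ref{form of aut}, the proof only ever feeds hand-built automorphisms into the hypothesis rather than classifying all automorphisms of the product.
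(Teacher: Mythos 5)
Your proof is correct, and for part (1) and the descent step of part (2) it is essentially the paper's argument: the same shear automorphisms $\phi(g,v)=(g,\alpha(g)+v)$ and the same product automorphisms $\psi\times\mathrm{id}$, only stated directly where the paper argues by contradiction. The one genuine divergence is the final step of (2). The paper invokes Theorem \ref{ker f.g} to extract, from $\Sigma^1(G)\neq S(G)$, a real character $\alpha\colon G\to\R$ whose kernel is not finitely generated, observes that its image is discrete of rank $n\leq \rk(H^1(G;\Z))\leq m$, and then feeds the corresponding shear on $G\times\Z^n$ into the already-established $\fgfpa$ of $G\times\Z^n$. You instead construct a single explicit homomorphism $\alpha\colon G\to\Z^m$ that kills exactly the torsion of $G^{\mathrm{ab}}$, so that $[G,G]$ sits inside $\ker\alpha$ with finite index and finite generation transfers directly. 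Your route is a little more elementary and self-contained at that point: you use Theorem \ref{ker f.g} only for the cosmetic translation into the language of $\Sigma^1$, whereas the paper uses it substantively to produce the character; the price is the (easy) structural input $G^{\mathrm{ab}}\cong\Z^r\oplus T$ with $T$ finite. Both arguments, yours and the paper's, tacitly use that $G$ is finitely generated (apply $\fgfpa$ to the identity automorphism); you rely on this implicitly when decomposing $G^{\mathrm{ab}}$ and when invoking Theorem \ref{ker f.g}, so it is worth one explicit sentence, but it is not a gap.
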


\begin{proof}
(1) First, we assume that $G$ does not have $\fgfpa$, that is, there is an automorphism $\psi$ of $G$, such that $\fix\psi$ is not finitely generated. Consider the automorphism $\phi:G\times \Z \to G\times \Z$  given by:
$$\phi(g, n)=(\psi(g), n),$$
whose fixed subgroup $\fix\phi=\fix\psi\times \Z$ is not finitely generated, contradicting that $G\times \Z$ has $\fgfpa$.

Now we assume  $S\Q(G)\not\subset \Sigma^1(G)$,  or equivalently,  there is a non-trivial homomorphism $\alpha: G\to \Z$ such that $\ker\alpha$ is not finitely generated by Theorem \ref{ker f.g}. Let $\phi:G\times \Z \to G\times \Z$  be as follows:
$$\phi(g,n)=(g, ~\alpha(g)+n).$$
Then $\phi$ is an automorphism whose inverse is $\phi^{-1}(g,n)=(g, n-\alpha(g))$. It is easy to see that $\fix\phi=\ker\alpha\times \mathbb{Z}$ is not finitely generated, also contradicting that $G\times \Z$ has $\fgfpa$.\\

(2) If $G\times \mathbb{Z}^m$ has $\fgfpa$ for some $m\geq \rk(H^1(G;\Z))$, then $G\times \mathbb{Z}^n$ has $\fgfpa$ for every $0\leq n\leq m$, following directly from Item (1) because $G\times \Z^m=(G\times \Z^{m-1})\times \Z$.  Now we assume that $[G,G]$ is not finitely generated. Then Theorem \ref{ker f.g} implies $\Sigma^1(G)\neq S(G)$, and there is a non-trivial homomorphism $\alpha: G\to \R$ such that $\ker\alpha$ is not finitely generated.  Note that the image of $\alpha$ is an abelian group $\Z^n$ with $n\leq \rk(H^1(G;\Z))$. So $\alpha$ can be viewed as a homomorphism $\alpha: G\to \Z^n$, and the automorphism
$$\phi:G\times \Z^n \to G\times \Z^n, ~\phi(g,v)=(g, ~\alpha(g)+v),$$
has fixed subgroup $\fix\phi=\ker\alpha\times \mathbb{Z}^n$ not finitely generated, contradicting that $G\times \Z^n$ has $\fgfpa$.
\end{proof}

\begin{rem}\label{fanli}
Spahn and Zaremsky \cite{SZ21} showed that every kernel of a map from the group $F_{2,3}$ to $\Z$ is finitely generated, but there exist maps from $F_{2,3}$ to $\Z^2$ whose kernels are not finitely generated. For the definition of  $F_{2,3}$ and more details, see \cite{SZ21}.
\end{rem}

\begin{exam}
	Let $G$ be a non-abelian limit group. Then $G\times \mathbb{Z}$ (and hence $G\times \Z^m (m\geq 1)$) does not have $\fgfpa$.  Indeed, Kochloukova \cite{Koc10} proved that the BNS-invariant of a non-abelian limit group is the empty set. Note that the sphere $S(G)$ is not empty, so $S\Q(G)\not\subset \Sigma^1(G)$. Then $G\times \mathbb{Z}$ does not have $\fgfpa$ according to Theorem \ref{main thm 1}.
\end{exam}
\subsection{Sufficient condition}
To give sufficient conditions for Question \ref{problem 1} to have a positive answer,
we need introduce the Howson and weakly Howson properties first.

\begin{defn}
A group $G$ is said to have the \textit{Howson property}, if the intersection $H\cap K$ of any two finitely generated subgroups $H, K<G$ is again finitely generated; and $G$ is said to have the \textit{weakly Howson property}, if in addition, one of $H$ and $K$ is normal in $G$.
\end{defn}

Note that a group with the Howson property necessarily has the weakly Howson property; and simple groups have the weakly Howson property clearly.
Free groups and surface groups both have the Howson property \cite{How54}. More concretely, for a free or surface group $G$,
$$\rk(H\cap K)-1\leq (\rk(H)-1)(\rk(K)-1),$$
which was conjectured by Hanna Neumann in 1957, and proved independently by Friedman \cite{Fri14} and by Mineyev \cite{Min12} in 2011 for free groups, and by Antol\'{i}n and Jaikin-Zapirain \cite{AJZ22} in 2022 for surface groups.

Moreover, we have:

\begin{lem}(Some basic properties of the weakly Howson property)
\begin{enumerate}
  \item $F_2\times \Z$ does not have the weakly Howson property, and hence does not have the Howson property;
  \item The Howson property is heritable (i.e., if a group has the Howson property, then each subgroup of it does), and hence any group containing a subgroup isomorphic to $F_2\times \Z$ does not have the Howson property;
  \item Thompson's group $V$ has the weakly Howson property but does not have the Howson property;

  \item The weakly Howson property is not heritable.
  \end{enumerate}
\end{lem}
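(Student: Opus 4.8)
The plan is to prove the four items in order, treating (1) and (2) as the foundation and deducing (3) and (4) from them.

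For item (1), I would exhibit two finitely generated subgroups of $F_2\times\Z=\langle a,b\rangle\times\langle t\rangle$, one of them normal, whose intersection is not finitely generated. Let $\alpha:F_2\to\Z$ be the total-$a$-exponent homomorphism, $\alpha(a)=1$, $\alpha(b)=0$, whose kernel is the normal closure of $b$, free of infinite rank (this is essentially the Example in the introduction). Take $H=F_2\times\{0\}$, which is normal in $F_2\times\Z$ as a direct factor, and take $K=\{(g,\alpha(g))\mid g\in F_2\}$, the graph of $\alpha$, which is generated by $(a,1)$ and $(b,0)$ and is isomorphic to $F_2$, hence finitely generated. Then $H\cap K=\ker\alpha\times\{0\}$ is not finitely generated, so $F_2\times\Z$ fails the weakly Howson property, and a fortiori the Howson property.

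For item (2), heritability of the Howson property is immediate from the definition: if $G$ has the Howson property and $G_0\leq G$, then any two finitely generated subgroups $H,K\leq G_0$ are finitely generated subgroups of $G$, and their intersection, the same set whether computed in $G_0$ or in $G$, is finitely generated. The ``hence'' clause is the contrapositive combined with (1): if a group containing a copy of $F_2\times\Z$ had the Howson property, that copy would inherit it, contradicting (1).

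The substantive point is item (3). That Thompson's group $V$ has the weakly Howson property is clear from simplicity, as already remarked in the text: the only normal subgroups are $\{1\}$ and $V$, and in either case the intersection with a finitely generated subgroup is finitely generated, using that $V$ is itself finitely generated. To show $V$ fails the Howson property, by item (2) it suffices to embed $F_2\times\Z$ into $V$. I would use the disjoint-support construction: picking two disjoint clopen subsets of the Cantor set, the subgroups of $V$ supported on them commute and each is isomorphic to $V$, so $V$ contains $V\times V$; since $V$ contains a non-abelian free subgroup and an infinite cyclic subgroup, $V\times V$, and therefore $V$, contains $F_2\times\Z$. The main thing to be careful about here, and the chief obstacle, is justifying that $V$ genuinely contains a rank-two free subgroup (in contrast with Thompson's group $F$), together with the standard facts that elements of disjoint support commute and that the disjoint-support copies generate an honest direct product; these are where I would be most explicit or cite the literature. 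Finally, item (4) falls out as a corollary of (1) and (3): $V$ has the weakly Howson property but contains the subgroup $F_2\times\Z$, which by (1) does not, so the weakly Howson property is not heritable.
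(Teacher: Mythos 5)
Your proposal is correct and follows essentially the same route as the paper: item (1) is the paper's argument with the roles of $a$ and $b$ swapped (the paper's $K=\langle a,bt\rangle$ is exactly the graph of an exponent-sum homomorphism, and the intersection with the normal factor $F_2$ is an infinitely generated normal closure), items (2) and (4) are identical, and item (3) rests on simplicity of $V$ plus an embedding of $F_2\times\Z$ into $V$. The only cosmetic difference is that the paper cites Higman for the fact that subgroups of $V$ are closed under direct products, whereas you sketch the disjoint-support construction and correctly flag that the existence of $F_2$ inside $V$ is the point needing a reference.
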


\begin{proof}
(1) Let $F_2\times \Z=\langle a, b\rangle \times \langle t \rangle$ and let $K=\langle a, bt\rangle$ be a finitely generated subgroup. Then $F_2=\langle a, b\rangle$ is normal in $F_2\times \Z$, and $F_2\cap K=\langle b^nab^{-n}\mid n\in\Z\rangle$  is the normal closure of $a$ in $F_2$, and hence not finitely generated. Therefore, $F_2\times \Z$ does not have the weakly Howson property (and hence does not have the Howson property).

(2) The Howson property is heritable clearly. So any group with a subgroup isomorphic to $F_2\times \Z$ does not have the Howson property. For example,  the special linear group $SL(n, \Z) ~( n\geq 4)$ contains a subgroup isomorphic to  $F_2\times \Z$ and does not have the Howson property. Moreover, by the virtually fibered theorem of 3-manifolds, every hyperbolic 3-manifold of finite volume is finitely covered by a surface bundle over the circle. So the fundamental group of every hyperbolic 3-manifold of finite volume does not have the Howson property either \cite{So92}.

(3) Note that Thompson's group $V$ is a finitely presented infinite simple group, then it has the weakly Howson property. Moreover, Thompson's group $V$ contains a remarkable variety of subgroups, such as finitely
generated free groups, finitely generated abelian groups, Houghton’s groups; and the class of
subgroups of $V$ is closed under direct products and restricted wreath products
with finite or infinite cyclic top group \cite{Hig74}. In particular, $V$ contains $F_2\times \Z$ as a subgroup and hence $V$ does not have the Howson property.

(4) This item follows the above item (3) clearly.
\end{proof}

\begin{lem}\label{f.g subgrp}
Let $G$ be a finitely generated group, $H<G$ a finite index subgroup and $K<G$ a finitely generated subgroup. Then $H\cap K$ is finitely generated.
\end{lem}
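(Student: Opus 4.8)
The plan is to reduce the statement to the classical fact that a finite-index subgroup of a finitely generated group is itself finitely generated, applied not to $G$ directly but to $K$. The first step is therefore to show that $H\cap K$ has finite index \emph{in} $K$. To this end I would consider the natural map from the set of left cosets of $H\cap K$ in $K$ to the set of left cosets of $H$ in $G$, sending $k(H\cap K)\mapsto kH$ for $k\in K$. This map is well defined, and it is injective: if $k_1H=k_2H$ with $k_1,k_2\in K$, then $k_2^{-1}k_1\in H$ and also $k_2^{-1}k_1\in K$, so $k_2^{-1}k_1\in H\cap K$, whence $k_1(H\cap K)=k_2(H\cap K)$. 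It follows that $[K:H\cap K]\leq [G:H]<\infty$.

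The second step is to invoke Schreier's lemma (the Reidemeister--Schreier rewriting process): if a group is finitely generated and one of its subgroups has finite index, then that subgroup is finitely generated, with an explicit finite generating set (the Schreier generators) built from a finite generating set of the ambient group together with a finite transversal of the subgroup. I would apply this with $K$ playing the role of the ambient group and $H\cap K$ the finite-index subgroup; since $K$ is finitely generated by hypothesis and $[K:H\cap K]<\infty$ by the first step, I conclude that $H\cap K$ is finitely generated, as desired.

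The argument presents no serious obstacle once the two ingredients are isolated; the one point that genuinely requires attention is that finiteness of the index must be established for $H\cap K$ sitting inside $K$ (whose ambient overgroup is finitely generated), rather than inside $G$. The coset-injection of the first step is precisely what transfers the hypothesis $[G:H]<\infty$ into the statement $[K:H\cap K]<\infty$, and it is this reduction that lets Schreier's lemma do the work. If a fully self-contained treatment were wanted, the only real technical content would be the proof of Schreier's lemma itself, but as this is entirely standard I would simply cite it rather than reproduce it.
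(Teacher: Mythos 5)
Your proof is correct and follows exactly the same route as the paper: establish $[K:H\cap K]\leq [G:H]<\infty$ via the coset injection, then conclude from the standard fact (Schreier) that a finite-index subgroup of the finitely generated group $K$ is finitely generated. The paper merely states these two steps without spelling out the details you supply.
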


\begin{proof}
It is easy to see that $H\cap K$ is finite index in $K$. Since $K$ is finitely generated, we have $H\cap K$ is also finitely generated.
\end{proof}

\begin{thm}\label{main thm 2}
Let $G$ be a centerless group with the weakly Howson property. Then
\begin{enumerate}
\item $G\times \mathbb{Z}$ has $\fgfpa$ if and only if $G$ has $\fgfpa$ and  $S\Q(G)\subset \Sigma^1(G)$, or equivalently, every homomorphism $\alpha: G\to\Z$ has finitely generated kernel.
\item $G\times \mathbb{Z}^m$ has $\fgfpa$ for every $m\geq 1$ if and only if $G$ has $\fgfpa$ and  $\Sigma^1(G)=S(G)$, or equivalently, the derived subgroup $[G,G]$ is finitely generated.
\end{enumerate}
\end{thm}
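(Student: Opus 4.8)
The plan is to dispose of the two ``only if'' implications immediately using the necessary conditions already in hand, and then to concentrate the real work on the two ``if'' implications, which I would prove by a single uniform computation of the fixed subgroup.

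For the ``only if'' directions no new idea is needed. If $G\times\Z$ has $\fgfpa$, then Theorem \ref{main thm 1}(1) yields at once that $G$ has $\fgfpa$ and $S\Q(G)\subset\Sigma^1(G)$. If $G\times\Z^m$ has $\fgfpa$ for every $m\geq 1$, then applying Theorem \ref{main thm 1}(2) with any $m\geq\rk(H^1(G;\Z))$ and reading off the case $n=0$ gives that $G$ has $\fgfpa$ and $\Sigma^1(G)=S(G)$, i.e.\ that $[G,G]$ is finitely generated. The weakly Howson hypothesis plays no role here.

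For the ``if'' directions I would first reduce to a structural description of $\fix\phi$. Since $G$ is centerless, Proposition \ref{form of aut} writes an arbitrary $\phi\in\aut(G\times\Z^m)$ as $\phi(g,v)=(\psi(g),\,\alpha(g)+\mathcal Lv)$ with $\psi\in\aut(G)$, $\mathcal L\in GL_m(\Z)$ and $\alpha\colon G\to\Z^m$ a homomorphism; a direct check then gives
\[
\fix\phi=\{(g,v)\in G\times\Z^m \mid g\in\fix\psi,\ (I-\mathcal L)v=\alpha(g)\}.
\]
Writing $A:=I-\mathcal L$, the idea is to peel off the two abelian directions inside $\Z^m$ by two short exact sequences. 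Projecting onto the $G$-coordinate maps $\fix\phi$ onto $Q:=\{g\in\fix\psi \mid \alpha(g)\in A\Z^m\}$ with kernel $\{1\}\times\{v\in\Z^m\mid Av=0\}$, a finitely generated free abelian group; and restricting $\alpha$ to $Q$ maps $Q$ onto $\alpha(\fix\psi)\cap A\Z^m\leq\Z^m$, again finitely generated free abelian, with kernel $\fix\psi\cap\ker\alpha$. Since an extension of a finitely generated group by a finitely generated group is finitely generated, $\fix\phi$ is finitely generated as soon as $\fix\psi\cap\ker\alpha$ is. This reduction works verbatim for every $m$, so the two parts are handled together.

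The crux, and the one place the hypotheses enter, is thus to show $\fix\psi\cap\ker\alpha$ is finitely generated; this is where I expect the difficulty and where the weakly Howson property is indispensable. The subgroup $\fix\psi$ is finitely generated because $G$ has $\fgfpa$. For the other factor: in case (1) we have $m=1$, and $S\Q(G)\subset\Sigma^1(G)$ together with Theorem \ref{ker f.g}(2) makes $\ker\alpha$ finitely generated; in case (2) the hypothesis $\Sigma^1(G)=S(G)$ gives $S(G,\ker\alpha)\subset\Sigma^1(G)$ for the normal subgroup $\ker\alpha$ (whose quotient $G/\ker\alpha\hookrightarrow\Z^m$ is abelian), so Theorem \ref{ker f.g}(1) again makes $\ker\alpha$ finitely generated, now for all $m$. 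As $\ker\alpha$ is moreover normal in $G$, the weakly Howson property applies to the pair $\fix\psi,\ \ker\alpha$ and yields that $\fix\psi\cap\ker\alpha$ is finitely generated; feeding this back through the two short exact sequences completes the proof. I would note that only the normality of $\ker\alpha$ is used, so the full Howson property is not needed, and that without even the weakly Howson property the conclusion genuinely fails, as the $F_2\times\Z$ example in the Introduction already shows.
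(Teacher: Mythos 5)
Your proof is correct and follows essentially the same route as the paper's: Proposition \ref{form of aut}, the projection short exact sequence onto the $G$-coordinate, Theorem \ref{ker f.g} to produce a finitely generated normal subgroup, and the weakly Howson property applied to its intersection with $\fix\psi$. The only (harmless) variation is that you interpose a second abelian quotient so as to apply the weakly Howson property to $\ker\alpha$ rather than to $\alpha^{-1}((\id-\mathcal L)\mathbb{Z}^m)$, which lets you treat parts (1) and (2) and both signs of $\mathcal L$ uniformly, whereas the paper handles the $\mathcal L=-\id$ case of part (1) separately via the finite-index Lemma \ref{f.g subgrp} without invoking the Howson hypothesis at all.
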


\begin{proof}
The ``only if" part follows  from Theorem \ref{main thm 1} clearly.  For the  ``if" part, note that $G$ is centerless, then by  Proposition \ref{form of aut}, every automorphism $\phi$ of $G\times \mathbb{Z}^m$ $(m\geq 1)$ has the form
$$\phi(g,v)=(\psi(g),\alpha(g)+\mathcal{L}v),$$
where $\psi\in \aut(G)$,  $\alpha\in \mathrm{Hom}(G,\mathbb{Z}^m)$ and $\mathcal{L}\in \aut(\Z^m).$  So the fixed subgroup
\begin{equation}\label{eq. fixsubgp}
\fix\phi=\{(g, v)\in G\times \Z^m\mid \psi(g)=g, ~\alpha(g)+\mathcal{L}v=v\}.
\end{equation}
Below we prove the ``if" part for the two items respectively.

(1) In this case, $m=1$ and $\mathcal{L}=\pm \id$.  Since $G$ has $\fgfpa$ and $S\Q(G)\subset \Sigma^1(G)$, $\fix\psi$ and $\ker\alpha$ are both finitely generated for every homomorphism $\alpha: G\to\Z$ by Theorem \ref{ker f.g}.
When $\mathcal{L}=\id$, we have
$$\fix\phi=\{(g, n)\mid \psi(g)=g, ~\alpha(g)+n=n\}=(\fix\psi\cap\ker\alpha)\times\mathbb{Z}.$$
Then $\fix\phi$ is finitely generated by the weakly Howson property of $G$.
When $\mathcal{L}=-\id$, the fixed subgroup is
\begin{eqnarray}
    \fix\phi&=&\{(g, n)\mid \psi(g)=g, ~\alpha(g)-n=n\}\nonumber\\
    &=&\{(g, n)\mid g\in \fix\psi\cap\alpha^{-1}(2\mathbb{Z}), ~n=\alpha(g)/2\}\nonumber\\
    &\cong&\fix\psi\cap \alpha^{-1}(2\mathbb{Z}).\nonumber
\end{eqnarray}
Actually the weakly Howson property is useless in this case, because $\alpha^{-1}(2\mathbb{Z})<G$ is a subgroup of index $\leq 2$, then  $\fix\phi$ is finitely generated by Lemma \ref{f.g subgrp}. So $G\times \Z$ has $\fgfpa$ and Item (1) holds.\\

(2) To prove $ G\times \Z^m$  has $\fgfpa$ for every $m\geq 1$, let us consider the projection $$p:  G\times \Z^m\to  G, \quad p(g, v)=g.$$
Then, by Eq. (\ref{eq. fixsubgp}), we have the natural short exact sequence
$$0\to \fix\phi\cap\ker p\hookrightarrow\fix\phi \xrightarrow{p} p(\fix\phi)\to 1,$$
where
$$\fix\phi\cap\ker p=\{(1, v)\in G\times \Z^m\mid \mathcal{L}v=v\}\cong \Z^s,$$
for some $s\leq m$, and
\begin{eqnarray}\label{eq. 1}
p(\fix\phi)&=&\{g\in \fix\psi \mid \exists v=\alpha(g)+\mathcal{L}v\in \Z^m \}\nonumber\\
&=&\fix\psi\cap\alpha^{-1}((\id-\mathcal{L})\mathbb{Z}^m)
\end{eqnarray}
is a normal subgroup of  $\fix\psi$.

By the above exact sequence, to prove that $\fix\phi$ is finitely generated, it suffices to prove that $p(\fix\phi)$ is finitely generated. Indeed, note that  $\Sigma^1(G)=S(G)$, and $G/\alpha^{-1}((\id-\mathcal{L})\mathbb{Z}^m)$ is a quotient of $\alpha(G)$ and hence it is abelian.  So by Theorem \ref{ker f.g}, $\alpha^{-1}((\id-\mathcal{L})\mathbb{Z}^m)$ is a finitely generated normal subgroup of $G$. Moreover,  since $G$ has $\fgfpa$, both of $G$ and $\fix\psi$ are finitely generated.  By the weakly Howson property of $G$, $p(\fix\phi)$ is again finitely generated.

Therefore, $\fix\phi$ is finitely generated, and hence $G\times \mathbb{Z}^m$ $(m\geq 1)$ has $\fgfpa$.
\end{proof}

For arbitrary group $G$, it seems too difficult to guarantee that $G$ has both the (weakly) Howson and $\fgfpa$ properties, and $[G, G]$ finitely generated, except $G$ is \textit{slender}, that is, every subgroup of $G$ is finitely generated. For example, all finite  groups and all finitely generated nilpotent groups are slender. Note that nilpotent groups have non-trivial centers.

Hyperbolic groups have $\fgfpa$ and non-elementary hyperbolic groups are centerless. However, a hyperbolic group $G$ is not necessarily with the Howson property and $[G,G]$ finitely generated. For example, every non-solvable surface group is hyperbolic and has the Howson property but its derived subgroup is not finitely generated.

Every finitely generated non-abelian simple group $G$ (e.g.  Thompson's group $V$) is centerless, with $[G, G]$ finitely generated and satisfies the weakly Howson property, but currently, little is known about its fixed subgroups. So we have:

\begin{ques}
Is there a centerless, infinite group $G$ with $[G, G]$ finitely generated and satisfying the (weakly) Howson and $\fgfpa$ properties?
\end{ques}

\subsection{Non-trivial example}

For a hyperbolic 3-manifold $M$,  Soma \cite{So92} showed that the fundamental group $\pi_1(M)$ has the Howson property if and only if $M$ has infinite volume.
Although the fundamental group $G=\pi_1(M)$ of a hyperbolic 3-manifold $M$ with infinite volume has the Howson property, it never has finitely generated  derived subgroup $[G, G]$ except $G$ is nilpotent. In fact, assume there is a hyperbolic 3-manifold $M$ with infinite volume and with fundamental group having finitely generated derived subgroup $[G, G]$. Moreover, assume that  $M$ is not the  solid torus, and not homotopic to a closed surface. For such  a hyperbolic 3-manifold $M$, we know one component of $\partial M$ has genus at least two, so the first homology group of $M$ is infinite.  Let $N$ be the infinite  regular cover of $M$  with fundamental group  $\pi_1(N)=[G, G]$.    We may deform the hyperbolic structure on $M$ such that it is geometrically finite and has no cusps. By Thurston's theorem \cite{Ca94},  the hyperbolic structure on $N$ is also geometrically finite.  Since  $[G, G]$ is a normal subgroup of $G$, they have the same limit set. Let $C$ be the convex hull of the limit set, then the convex core of $N$ is $C/[G, G]$, and the convex core of $M$ is $C/G$.
Both of them are non-zero finite volume since $N$ and $M$ are geometrically finite. But
 $C/[G, G]$ is an infinite cover of $C/G$, a contradiction.

In the case of $M$ having finite volume, Lin and Wang \cite{LW14} studied the fixed subgroups of automorphisms of  $\pi_1(M)$ and showed the following.

\begin{thm}\cite[Theorem 1.6]{LW14}\label{LW thm}
Suppose $G=\pi_1(M)$ where $M$ is an orientable hyperbolic 3-manifold with finite volume, and $\phi$ is an automorphism of $G$. Then $\fix\phi$ is of one of the following types: the whole group $G$; the trivial group 1; $\mathbb{Z}$; $\mathbb{Z}\times\mathbb{Z}$; a surface group $\pi_1(S)$, where $S$ can be orientable or not, and closed or not. More precisely,
	\begin{enumerate}
		\item Suppose $\phi$ is induced by an orientation preserving isometry.
		\begin{enumerate}
			\item $\fix\phi$ is either $\mathbb{Z}$, or $\mathbb{Z}\times\mathbb{Z}$, or $G$, or 1; moreover
			\item If $M$ is closed, then $\fix\phi$ is either $\mathbb{Z}$ or $G$;
		\end{enumerate}
		\item Suppose $\phi$ is induced by an orientation reversing isometry $f$.
		\begin{enumerate}
			\item If $\phi^2\neq id$, then $\fix\phi$ is either $\mathbb{Z}$ or 1;
			\item If $\phi^2= id$, then $\fix\phi$ is either 1, or the surface group $\pi_1(S)$, where $S$ is an embedded surface in $M$ that is pointwise fixed by $f$.
		\end{enumerate}
	\end{enumerate}
\end{thm}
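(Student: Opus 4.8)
The plan is to exploit the fact that, by Mostow--Prasad rigidity, the outer automorphism group of $G=\pi_1(M)$ is isomorphic to the finite isometry group $\iso(M)$, so that every $\phi\in\aut(G)$ is realized geometrically. Concretely, I would regard $G$ as a discrete torsion-free subgroup of $\iso(\mathbb{H}^3)$ with $M=\mathbb{H}^3/G$ and lift $\phi$: rigidity produces a unique isometry $\tilde f\in\iso(\mathbb{H}^3)$ normalizing $G$ with $\phi(g)=\tilde f g\tilde f^{-1}$ for all $g\in G$, uniqueness holding because the non-elementary group $G$ is Zariski dense and hence has trivial centralizer in $\iso(\mathbb{H}^3)$. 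The crucial reduction is then the tautology $\fix\phi=C_G(\tilde f):=\{g\in G\mid g\tilde f=\tilde f g\}$, which turns the problem into a classification of centralizers of normalizing isometries. Geometrically, $\tilde f$ descends to a finite-order isometry $f$ of $M$, and $\fix\phi$ is the fundamental group of the relevant component of $\fix f$, which is $\pi_1$-injective since it is totally geodesic.

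First I would treat the orientation-preserving case, where $\tilde f\in\mathrm{PSL}(2,\mathbb{C})$, splitting by dynamical type. If $\tilde f=\id$ then $\phi=\id$ and $\fix\phi=G$; if $\phi$ is inner then $\tilde f=g_0\in G$ and $C_G(g_0)\cong\mathbb{Z}$ for $g_0\neq 1$. For a non-trivial non-inner $\tilde f$ I would invoke the standard description of centralizers in Kleinian groups: a loxodromic or non-identity elliptic $\tilde f$ has an invariant axis $L$, so $C_G(\tilde f)$ is the stabilizer of $L$ in $G$, which is $1$ or infinite cyclic; a parabolic $\tilde f$ fixes a single boundary point $p$, forcing $C_G(\tilde f)\subseteq\stab_G(p)$, and when $p$ is a cusp this peripheral subgroup is $\mathbb{Z}\times\mathbb{Z}$, producing the $\mathbb{Z}\times\mathbb{Z}$ case together with its subpossibilities. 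To obtain the sharper closed statement I would note that a closed manifold has no cusps, so $\tilde f$ is never parabolic, and that for a non-trivial $f$ the centralizer is never trivial: if $\tilde f$ is loxodromic then $\tilde f^{\,k}\in G$ (with $k=\mathrm{ord}(f)$) is a non-trivial element sharing its axis, whence $C_G(\tilde f)\cong\mathbb{Z}$; if $\tilde f$ is elliptic then its fixed axis descends to a closed geodesic in $M$ whose $G$-stabilizer again contributes a $\mathbb{Z}$; and $C_G(\tilde f)=G$ is impossible for $\tilde f\neq\id$. This pins $\fix\phi$ to $\mathbb{Z}$ or $G$ in the closed case.

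For the orientation-reversing case I would pass to the square: $\phi^2$ is induced by $\tilde f^{\,2}\in\mathrm{PSL}(2,\mathbb{C})$ and $\fix\phi\leq\fix\phi^2=C_G(\tilde f^{\,2})$. When $\phi^2\neq\id$, the isometry $\tilde f^{\,2}$ is a non-trivial orientation-preserving one, so by the previous paragraph $C_G(\tilde f^{\,2})$ is already $\mathbb{Z}$, $\mathbb{Z}\times\mathbb{Z}$ or $1$, and a short additional computation rules out the peripheral $\mathbb{Z}\times\mathbb{Z}$ (an orientation-reversing $\tilde f$ with non-trivial square can centralize at most a rank-one subgroup of a parabolic lattice), leaving only $\mathbb{Z}$ or $1$. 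When $\phi^2=\id$, the isometry $\tilde f$ is an orientation-reversing involution, i.e.\ a reflection whose fixed locus is a totally geodesic plane $\mathbb{H}^2\subset\mathbb{H}^3$; then $C_G(\tilde f)$ is the stabilizer of that plane, namely the fundamental group of a totally geodesic (possibly non-orientable, possibly cusped) surface $S\subset M$, giving the surface-group case. The step I expect to be the main obstacle is precisely the cusped analysis: determining exactly when a parabolic lift yields the full peripheral $\mathbb{Z}\times\mathbb{Z}$ rather than a proper subgroup, and verifying that the totally geodesic fixed surfaces arising from reflections are $\pi_1$-injective and of finite type even when non-compact, so that their images are genuine surface groups. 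This demands a careful examination of the cusp cross-sections and of how $\tilde f$ permutes the parabolic fixed points of $G$, which is the technical heart of the argument.
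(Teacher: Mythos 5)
This theorem is not proved in the paper at all: it is quoted verbatim from Lin and Wang \cite[Theorem 1.6]{LW14}, so there is no internal proof to compare against. Your strategy --- realize $\phi$ geometrically via Mostow--Prasad rigidity, lift to the unique $\tilde f\in\mathrm{Isom}(\mathbb{H}^3)$ normalizing $G$ (uniqueness from the trivial centralizer of the non-elementary group $G$), identify $\fix\phi$ with $C_G(\tilde f)$, and classify by the dynamical type of $\tilde f$ --- is exactly the strategy of the cited source, and the case analysis you outline does deliver the stated conclusions. Two spots are nevertheless stated incorrectly, even though neither damages the final statement. First, for an inner automorphism with $\tilde f=g_0$ a \emph{parabolic} element (possible only in the cusped case), $C_G(g_0)$ is the full peripheral subgroup $\mathbb{Z}\times\mathbb{Z}$, not $\mathbb{Z}$, so your blanket claim ``$C_G(g_0)\cong\mathbb{Z}$ for $g_0\neq 1$'' is false as written; the outcome is still on the list in (1)(a). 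Second, an orientation-reversing involution of $\mathbb{H}^3$ need not be a reflection in a plane: it can also be a point inversion (derivative $-\id$ at its fixed point), whose fixed locus is a single point and whose centralizer in the torsion-free discrete group $G$ is therefore trivial; this is still covered by the ``$1$'' alternative in (2)(b), but your ``i.e.'' conflates two genuinely different isometry types and so your case analysis is formally incomplete there.

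Beyond these, the two facts you defer are indeed where the real work lies, and both follow from finiteness of volume of the quotient orbifold $\mathbb{H}^3/\langle G,\tilde f\rangle$: for closed $M$ the axis of an elliptic lift projects to a \emph{compact} subset of $M$ (a closed geodesic), so its $G$-stabilizer is infinite and $C_G(\tilde f)\cong\mathbb{Z}$ rather than $1$, which is precisely what sharpens (1)(a) to (1)(b); and the fixed locus of a reflection descends to a properly embedded totally geodesic surface of finite area, hence of finite type, so that $\stab_G(P)$ is an honest finitely generated surface group. Your proposal correctly identifies these as the technical heart, and with the two case-analysis corrections above it is a faithful reconstruction of the argument in \cite{LW14}.
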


Note that in \cite{LW14}, a 3-manifold $M$ is \textit{hyperbolic} if $M$ is orientable, compact and the interior of $M$ admits a complete hyperbolic structure of finite volume (then $M$ is either closed or with boundary consisting of a union of tori). As a corollary, we have:

\begin{prop}
\label{LW result}
 Let $G=\pi_1(M)$ for $M$ an orientable hyperbolic 3-manifold with finite volume and without an involution. Then for any automorphism $\phi$ of $G$, the fixed subgroup $\fix\phi$ is inert in $G$, namely,
$$\rk(H\cap \fix\phi)\leq \rk(H)$$
for any finitely generated subgroup $H<G$.
\end{prop}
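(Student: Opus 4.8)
The plan is to classify the possible fixed subgroups using Theorem~\ref{LW thm} and verify the inertness inequality $\rk(H\cap\fix\phi)\leq\rk(H)$ case by case. Since $M$ is an orientable hyperbolic $3$-manifold of finite volume \emph{without an involution}, no automorphism $\phi$ can satisfy $\phi^2=\id$ nontrivially; in particular case~(2b) of Theorem~\ref{LW thm} (where $\fix\phi$ is a surface group) cannot occur, because that case requires $\phi^2=\id$. Thus the only possibilities for $\fix\phi$ are $G$, the trivial group $1$, $\mathbb{Z}$, and $\mathbb{Z}\times\mathbb{Z}$. I would open by making this reduction explicit, so that the verification reduces to four concrete types.

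Next I would dispose of the trivial cases. If $\fix\phi=1$ or $\fix\phi=G$, inertness is immediate: in the former case $H\cap\fix\phi=1$ so $\rk(H\cap\fix\phi)=0\leq\rk(H)$, and in the latter case $H\cap\fix\phi=H$ so equality holds trivially. The substantive cases are $\fix\phi\cong\mathbb{Z}$ and $\fix\phi\cong\mathbb{Z}\times\mathbb{Z}$. For $\fix\phi\cong\mathbb{Z}$, the intersection $H\cap\fix\phi$ is a subgroup of $\mathbb{Z}$, hence either trivial or infinite cyclic, so $\rk(H\cap\fix\phi)\leq 1$. The inequality then holds whenever $\rk(H)\geq 1$, i.e.\ whenever $H$ is nontrivial; and if $H$ is trivial the intersection is trivial as well. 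So this case also goes through directly.

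The main point, and the step I expect to require the most care, is the case $\fix\phi\cong\mathbb{Z}\times\mathbb{Z}$. Here $\fix\phi$ is a rank-two free abelian subgroup, necessarily a peripheral (cusp) subgroup corresponding to a torus boundary component, since in a finite-volume hyperbolic $3$-manifold the only $\mathbb{Z}\times\mathbb{Z}$ subgroups of $G=\pi_1(M)$ are (conjugates of) such peripheral subgroups—the manifold being hyperbolic forbids any embedded incompressible torus other than the cusp tori. The intersection $H\cap\fix\phi$ is then a subgroup of $\mathbb{Z}\times\mathbb{Z}$, so $\rk(H\cap\fix\phi)\leq 2$. To conclude $\rk(H\cap\fix\phi)\leq\rk(H)$ I would argue that if $\rk(H\cap\fix\phi)=2$ then $H$ contains a copy of $\mathbb{Z}\times\mathbb{Z}$ and hence $\rk(H)\geq 2$, while if $\rk(H\cap\fix\phi)\leq 1$ the inequality already follows from $\rk(H)\geq 1$ (for $H$ nontrivial) as above. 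The genuine obstacle is ensuring the inequality is tight and correctly handles the boundary behavior: one must confirm that a subgroup of $\fix\phi\cong\mathbb{Z}^2$ of rank $r$ forces $\rk(H)\geq r$, which follows because $H\cap\fix\phi\leq H$ and a rank-$r$ abelian subgroup cannot be generated by fewer than $r$ elements, so $\rk(H)\geq\rk(H\cap\fix\phi)$ for any $H$ once the ambient intersection is abelian. I would therefore emphasize that in every surviving case $\fix\phi$ is abelian (of rank $\leq 2$), reducing the whole statement to the elementary fact that a subgroup of a subgroup has rank no larger than the overgroup, combined with the classification that rules out the surface-group case via the no-involution hypothesis.
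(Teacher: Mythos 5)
Your proposal is correct and takes essentially the same route as the paper: use the no-involution hypothesis (via Mostow rigidity) to exclude case (2b) of Theorem~\ref{LW thm}, so that $\fix\phi$ is $1$, $\Z$, $\Z\times\Z$, or $G$, and then verify inertness in each case. The paper compresses the final verification into ``the conclusion holds clearly,'' whereas you spell it out; just note that your closing slogan ``a subgroup of a subgroup has rank no larger than the overgroup'' is false in general and should be stated only for the abelian (or trivial, or whole-group) intersections actually occurring here, as your case analysis in fact does.
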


\begin{proof}
Since $M$ has no involution, we have $\phi^2\neq id$ for any automorphism $\phi$ of $G=\pi_1(M)$,  by the well known Mostow's rigidity theorem. Then by Lin and Wang's Theorem \ref{LW thm}, the fixed subgroup $\fix\phi$ is either 1,  $\Z$,  $\mathbb{Z}\times\mathbb{Z}$ or the whole group $G$. So the conclusion holds clearly.
\end{proof}

To get non-trivial examples, we remove the weakly Howson property from the conditions in Theorem \ref{main thm 2}.

\begin{thm}\label{main thm 3}
	The group $G\times \mathbb{Z}^m$ has $\fgfpa$ for every $m\geq 1$ if $G$ is one of the following:
	\begin{enumerate}
		\item a slender group (for example, a finite group or a finitely generated nilpotent group);
		\item $G=\pi_1(M)$ where $M$ is a closed orientable hyperbolic 3-manifold with finite first homology group $H_1(M)$, and with the isometry group $\iso(M)$ of odd order.
	\end{enumerate}
\end{thm}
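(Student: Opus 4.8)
The plan is to prove the two items by different routes: item (1) is a finiteness-condition argument that ignores the center of $G$ entirely, while item (2) combines Proposition \ref{form of aut} with the hyperbolic geometry of $M$, the hypothesis that $H_1(M)$ is finite, and Lin--Wang's classification.

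For item (1) I would argue that $G\times\Z^m$ is itself slender, so that \emph{every} subgroup---in particular every $\fix\phi$---is finitely generated. The point is that the class of slender groups (those satisfying the maximal condition on subgroups) is closed under extensions: given a short exact sequence $1\to N\to E\to Q\to 1$ with $N$ and $Q$ slender and any $H\le E$, the subgroup $H\cap N\le N$ and the quotient $H/(H\cap N)\cong HN/N\le Q$ are both finitely generated, hence so is $H$. Applying this to $1\to\Z^m\to G\times\Z^m\to G\to 1$, with $\Z^m$ and $G$ slender, shows $G\times\Z^m$ is slender and therefore has $\fgfpa$.

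For item (2) I would first collect the structural features of $G=\pi_1(M)$: being the fundamental group of a closed hyperbolic $3$-manifold, $G$ is a torsion-free, non-elementary hyperbolic group, hence centerless and endowed with $\fgfpa$ by \cite{Pa89, Neu92, Sh99}. Since $G$ is centerless, Proposition \ref{form of aut} writes every $\phi\in\aut(G\times\Z^m)$ as $\phi(g,v)=(\psi(g),\alpha(g)+\mathcal Lv)$ with $\psi\in\aut(G)$, $\alpha\in\mathrm{Hom}(G,\Z^m)$ and $\mathcal L\in\aut(\Z^m)$. The decisive observation is that finiteness of $H_1(M)$ forces $\alpha=0$: indeed $\mathrm{Hom}(G,\Z^m)=\mathrm{Hom}(G^{\mathrm{ab}},\Z^m)=\mathrm{Hom}(H_1(M),\Z^m)$, and every homomorphism from a finite abelian group into the torsion-free group $\Z^m$ is trivial. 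Thus $\phi(g,v)=(\psi(g),\mathcal Lv)$ and, by Eq.~(\ref{eq. fixsubgp}),
$$\fix\phi=\fix\psi\times\{v\in\Z^m\mid\mathcal Lv=v\}.$$
The second factor is a subgroup of $\Z^m$, hence free abelian of rank $\le m$, and the first is finitely generated because $G$ has $\fgfpa$; so $\fix\phi$ is finitely generated.

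It remains to locate the hypothesis on $\iso(M)$, which is what lets this fit the framework built around Proposition \ref{LW result}. By Mostow rigidity $\mathrm{Out}(G)\cong\iso(M)$; if $|\iso(M)|$ is odd then $\mathrm{Out}(G)$ has no element of order $2$, and since $G$ is torsion-free no nontrivial inner automorphism has order $2$ either, so any $\psi$ with $\psi^2=\id$ must be $\id$---equivalently $M$ admits no involution. Lin--Wang's Theorem \ref{LW thm}(1)(b) then pins $\fix\psi$ down to $\Z$ or $G$, and Proposition \ref{LW result} makes $\fix\psi$ inert in $G$. This is exactly what one needs to run the proof of Theorem \ref{main thm 2}(2) with \emph{inertness} in place of the \emph{weakly Howson} property: projecting by $p\colon G\times\Z^m\to G$ reduces finite generation of $\fix\phi$ to that of $p(\fix\phi)=\fix\psi\cap\alpha^{-1}((\id-\mathcal L)\Z^m)$, the normal subgroup $N=\alpha^{-1}((\id-\mathcal L)\Z^m)\supseteq[G,G]$ has finite index and so is finitely generated, and inertness yields $\rk(N\cap\fix\psi)\le\rk(N)<\infty$. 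I expect the main subtlety to be disentangling which hypothesis does which job: once one notices that $H_1(M)$ finite already forces $\alpha=0$, the projection-and-inertness machinery collapses (since then $N=G$ and $p(\fix\phi)=\fix\psi$), and the odd-order hypothesis is really serving to control the \emph{shape} of $\fix\psi$ via Lin--Wang rather than to rescue finite generation, which follows from the product splitting alone.
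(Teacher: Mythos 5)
Your proposal is correct. Item (1) is essentially the paper's own argument: the paper proves $G\times\Z^m$ is slender using the short exact sequence for the projection onto $\Z^m$ (kernel $G$), while you project onto $G$ (kernel $\Z^m$); both are the same extension-closure argument for slenderness.

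For item (2) you take a genuinely different, and in fact shorter, route. The paper keeps $\alpha$ arbitrary and reruns the proof of Theorem \ref{main thm 2}(2): it uses finiteness of $H_1(M)$ only to make $[G,G]$ (hence $\alpha^{-1}((\id-\mathcal L)\Z^m)$) finitely generated, and then invokes the odd-order hypothesis on $\iso(M)$ to rule out involutions, so that Proposition \ref{LW result} gives inertness of $\fix\psi$ and hence finite generation of $\fix\psi\cap\alpha^{-1}((\id-\mathcal L)\Z^m)$. You instead observe that $\mathrm{Hom}(G,\Z^m)\cong\mathrm{Hom}(H_1(M),\Z^m)=0$ because $H_1(M)$ is finite and $\Z^m$ is torsion-free, so $\alpha=0$ is forced, $\fix\phi=\fix\psi\times\ker(\mathcal L-\id)$ splits as a direct product, and finite generation follows from $G$ being hyperbolic (hence having $\fgfpa$) together with $\ker(\mathcal L-\id)\le\Z^m$. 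This is a correct and cleaner argument, and it buys something real: the Lin--Wang classification, the inertness statement, and the hypothesis that $\iso(M)$ has odd order are all unnecessary for the stated conclusion --- any closed orientable hyperbolic $3$-manifold with finite $H_1(M)$ works. The paper's heavier machinery would only become relevant if $\alpha$ could be nonzero, i.e.\ if $H_1(M)$ were infinite, but then $[G,G]$ fails to be finitely generated and Theorem \ref{main thm 1}(2) already obstructs $\fgfpa$ for large $m$.
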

\begin{proof}
(1) Let $G$ be a slender group, that is, every subgroup of $G$ is finitely generated. Then $G\times \Z^m$ is again slender, and hence it has $\fgfpa$. Indeed, for any subgroup $H<G\times \Z^m$, we have a short exact sequence
$$1\to H\cap\ker(p)\to H\to p(H)<\Z^m,$$
where $p$ is the natural projection $G\times\Z^m\to \Z^m$ with $\ker(p)=G$. Then $H\cap \ker(p)$ is a subgroup of the slender group $G$, and hence it is finitely generated. So $H$ is also finitely generated.\\

(2) Since $M$ is a closed orientable hyperbolic 3-manifold,  $G=\pi_1(M)$  is a centerless, finitely generated Gromov hyperbolic group, and hence $G$ has $\fgfpa$. Moreover, $H_1(M)$ finite implies that the derived subgroup $[G, G]$ is finite index in the finitely generated group $G$ and hence it is again finitely generated.  Note that the weakly Howson property in the proof of Theorem \ref{main thm 2}(2), is only used to ensure that
$$\fix\psi\cap\alpha^{-1}((\id-\mathcal{L})\mathbb{Z}^m)$$
in Eq. (\ref{eq. 1}) is finitely generated. But now, since $\iso(M)$ is of odd order, $M$ has no involution.  By Proposition \ref{LW result}, the fixed subgroup $\fix\psi$ is inert in $G$ for every automorphism $\psi:G\to G$.  So the rank
$$\rk(\fix\psi\cap\alpha^{-1}((\id-\mathcal{L})\mathbb{Z}^m))\leq\rk(\alpha^{-1}((\id-\mathcal{L})\mathbb{Z}^m)))<\infty,$$
that is, $\fix\psi\cap\alpha^{-1}((\id-\mathcal{L})\mathbb{Z}^m)$ is finitely generated.

In conclusion, we can show that $G\times \mathbb{Z}^m$ has $\fgfpa$, by the same argument as in the proof of Theorem \ref{main thm 2}(2), without using the weakly Howson property.
\end{proof}
There are many closed hyperbolic 3-manifolds  such that their  fundamental groups satisfy condition (2) of Theorem \ref{main thm 3}. For example,

\begin{exam}
Let $M$ be the  3-manifold $\mathbb{S}^3-9_{32}$, and $M_{p,q}$ be the Dehn filling from $M$ along the slope $p \mathcal{M}+q\mathcal{L}$.  Here $9_{32}$ is the knot in Rolfsen's list \cite{Ro76}, and $(\mathcal{M},\mathcal{L})$ is  the canonical meridian-longitude system of the cusp of $M$. So $H_{1}(M)= \mathbb{Z}_{p}$.  From Snappy, we know that $\mathbb{S}^3-9_{32}$ is hyperbolic with trivial isometry group. When $p$, $q$ are large enough, $M_{p,q}$ is a closed hyperbolic 3-manifold by Thurston's hyperbolic Dehn surgery theorem.  The proof  of Thurston's hyperbolic Dehn surgery theorem  also implies the Dehn filling has a very short  geodesic core  when $p$, $q$ are large. So  any isometry of $M_{p,q}$  will preserve the core of the Dehn filling, and hence will preserve $M$, it will be isotopic to the identity. In total, when $p$, $q$ are large enough, we have $M_{p,q}$ a closed hyperbolic 3-manifold with trivial isometry group, and finite first homology group. So  the   fundamental group of it  satisfies condition  (2) of Theorem \ref{main thm 3}.
\end{exam}

Finally, inspired by Theorem \ref{main thm 3}, we wonder whether the assumption ``centerless and with the weakly Howson property" in  Theorem \ref{main thm 2} can be removed or not. Namely,

\begin{ques}
Does $G\times\Z$ have $\fgfpa$ if the group $G$ has $\fgfpa$ and the derived subgroup $[G, G]$ is finitely generated?
\end{ques}


\vspace{6pt}

\noindent\textbf{Acknowledgements.}  The authors would like to thank Luis Mendon\c{c}a for pointing out an error in the first version of this note, and for providing Remark \ref{fanli}. The authors also thank Enric Ventura for his helpful comments, which led us to more general results of $G\times \Z^m$ instead of  $G\times \Z$. Furthermore, the authors thank the anonymous referee very much for his/her valuable and
detailed comments helped to greatly improve our original manuscript, especially for inspiring us to introduce the concept of the weakly Howson property in Theorem \ref{main thm 2}.


\end{document}